\documentclass[11pt, letterpaper]{article}
\usepackage{fullpage}

\usepackage{amsmath,amsthm,amssymb,amsfonts}
\usepackage{mathtools}
\usepackage{mathrsfs}
\usepackage[shortlabels]{enumitem}
\usepackage{thmtools}

\usepackage{hyperref}
\usepackage[nameinlink,capitalise,noabbrev]{cleveref}
\crefformat{equation}{(#2#1#3)}

\usepackage{xcolor}
\usepackage{graphicx}
\usepackage{float}

\theoremstyle{definition}
\newtheorem{definition}{Definition}

\newtheorem{problem}[definition]{Problem}

\theoremstyle{plain}

\newtheorem{theorem}[definition]{Theorem}
\newtheorem{lemma}[definition]{Lemma}
\newtheorem{proposition}[definition]{Proposition}
\newtheorem{claim}[definition]{Claim}

\newtheorem{corollary}[definition]{Corollary}

\crefname{claim}{Claim}{Claims}
\crefname{lemma}{Lemma}{Lemmas}

\def \ce {\coloneqq}
\def \E {\mathbb{E}}

\renewcommand{\le}{\leqslant}
\renewcommand{\ge}{\geqslant}
\renewcommand{\leq}{\leqslant}

\renewcommand \b[2] {\binom{#1}{#2}}

\def \mF{\mathcal{F}}
\def \G {\mathcal{G}}
\def \mH{\mathcal{H}}

\def \us {\partial^+}
\def \usH {\partial^+(\mH)}

\hypersetup{
  colorlinks=true,
  linkcolor=blue,
  citecolor=blue,
  urlcolor=blue
}

\title{On the maximum density of $r$-graphs in which \\every $(r+1)$-set spans $0$ or $2$ edges
}
\author{
Vishesh Jain\thanks{Department of Mathematics, Statistics and Computer Science,
University of Illinois Chicago, United States. Emails:
\texttt{visheshj@uic.edu}, \texttt{haoranl8@uic.edu}, \texttt{mubayi@uic.edu}.}
\and
Haoran Luo\footnotemark[1]
\and
Dhruv Mubayi\footnotemark[1]
}
\date{}

\begin{document}
\maketitle

\begin{abstract}
 
In 1984, Frankl and F\"uredi asked for the maximum density of an $n$-vertex $r$-graph in which every $(r+1)$-set of vertices spans $0$ or $2$ edges.
They gave a construction with asymptotic density $2^{1-r}$.
We significantly improve this bound by constructing such \(r\)-graphs with density
$
  \Omega(r^{-3}),
$
thereby improving the dependence on \(r\) from exponential to polynomial.
We also obtain lower bounds for the more general problem in which every $(r+1)$-set spans an even number of edges from $\{0,2,\ldots,2k\}$.
\end{abstract}

\section{Introduction} \label{sec::Int}
An $n$-vertex graph in which every set of 3 vertices spans 0 or 2 edges must be complete bipartite, and hence the maximum number of edges in such a graph is  $\lfloor n^2/4 \rfloor$. We consider a generalization of this extremal problem to $r$-uniform hypergraphs (henceforth abbreviated as \emph{$r$-graphs})  posed by Frankl and F\"uredi~\cite{frankl1984exact} in 1984.

\begin{problem}[Problem~1 in~\cite{frankl1984exact}] \label{pro::0or2}
What is the maximum number of edges in an $n$-vertex $r$-graph in which every set of $r+1$ vertices spans $0$ or $2$ edges?
\end{problem}
In the same paper, they considered the case $r=3$ and gave a complete description of such $3$-graphs: each such $3$-graph is either a blow-up of a fixed $3$-graph on $6$ vertices with $10$ edges or is obtained by taking vertices as points on the unit circle and letting the edges be the triples whose convex hull contains the origin.

For general $r$, Frankl and F\"uredi (Construction 3 in~\cite{frankl1984exact}) generalized the latter case of $r=3$: put $n$ points randomly on the unit sphere in $\mathbb{R}^{r-1}$, and let an $r$-set form an edge if its convex hull contains the origin. This construction gives the lower bound $(1+o(1))2^{1-r}\b{n}{r}$ for \cref{pro::0or2}, which was the best known general lower bound before the present work.
For the upper bound, a double-counting argument by de Caen~\cite{caen1983extension} gives $\frac{n}{r^2} \b{n}{r-1} \approx \frac{1}{r} \b{n}{r}$, see Proposition~14 in~\cite{gunderson2017tournaments}.
Before the present work, apart from the exact $r=3$ description, the only improvement over the general Frankl--F\"uredi construction was for the case $r=4$.
Gunderson and Semeraro~\cite{gunderson2017tournaments} report an earlier random-tournament construction of Baber showing that the asymptotically optimal density $1/4$ can be attained. They subsequently proved that for every prime power $q \equiv 3 \pmod 4$, there is a $4$-graph on $q+1$ vertices and $\frac{q+1}{16}\b{q+1}{3}$ edges such that every set of $5$ vertices spans $0$ or $2$ edges, which matches the upper bound mentioned above.

In this paper, we give a simple construction which significantly improves the lower bound for \cref{pro::0or2}. We note that we do not try to optimize the constant $32$ to the best possible. 
\begin{theorem} \label{1overr3}
    For every uniformity $r \ge 2$ and integer $n \ge 4r^2$, there is an $n$-vertex $r$-graph $\mH$ such that every $(r+1)$-set of vertices in $\mH$ spans exactly $0$ or $2$ edges and
    \[
        |\mH| \ge \frac{1}{32r^3}\b{n}{r}.
    \]
\end{theorem}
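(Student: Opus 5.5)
The plan is an explicit construction followed by a direct check of the $0$-or-$2$ condition. The check is cleanest with the following reformulation. Fix an $(r+1)$-set $T$ and list it as $t_1<\dots<t_{r+1}$. The $r$-subsets of $T$ are exactly $T\setminus\{t_i\}$, so the requirement is that the set $I(T)=\{i: T\setminus\{t_i\}\in\mH\}$ always has size $0$ or $2$.

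Two naive constructions show what to avoid. The complete $r$-partite $r$-graph satisfies the condition: an $(r+1)$-set either misses some part, giving $0$ edges, or has exactly one part containing two of its vertices, giving exactly the $2$ edges obtained by deleting one of those two. Unrolling the natural recursion, in which one peels off a part $A$ and asks for $|S\cap A|=1$ or $|S\cap A|=r-1$, leads back to essentially this construction. Its density is $r!/r^r\approx e^{-r}$, so it is exponential. The lesson is that the parity/pairing mechanism must come from geometry or an ordering, not from a product structure.

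I would therefore place the vertices on a cycle $\mathbb{Z}_n$, as in the $r=3$ circle construction of Frankl and F\"uredi. Write an $r$-set $S$ in cyclic order with gaps $g_1,\dots,g_r$ summing to $n$. Deleting $t_i$ from $T$ merges the two gaps $h_{i-1},h_i$ of $T$ into one gap $h_{i-1}+h_i$ and leaves the other gaps unchanged. I would declare $S$ an edge by a rule on its gap vector designed so that, for each $T$, exactly zero or two merges produce an edge. The natural candidate is to use two thresholds. Let $S$ be an edge iff exactly one gap lies in a window $[a,b)$ with $b-a\approx n/r^{2}$, and all other gaps are at most some bound.

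The intended mechanism is as follows. In $T$, the unique long or critical gap can be created in only two ways: by merging it with its left neighbour or with its right neighbour. This yields exactly the two edges $T\setminus\{t_{i-1}\}$ and $T\setminus\{t_i\}$ flanking one short gap, and no other merge qualifies. The thresholds will then be tuned so that a random $r$-set is an edge with probability about $1/(32r^3)$. This requires a gap of size $\Theta(n/r^2)$ in a window, roughly $1/r$ for the location times $1/r^2$ for the window width. Such a tuning is where I expect the power $r^{3}$ and the constant $32$ to come from. The hypothesis $n\ge 4r^2$ is exactly what makes windows of length of order $n/r^2$ nonempty at integer scale.

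The counting step is routine: compute the proportion of $r$-sets whose gap vector has the prescribed shape, using that gaps of a uniform random $r$-set behave like uniform spacings, and bound it below by $\frac{1}{32r^3}\b{n}{r}$. The main obstacle is the verification step. For an arbitrary $(r+1)$-set $T$, one must show that the merges producing an edge come in exactly one adjacent pair or not at all, and in particular that no merge of two short gaps accidentally lands in the window. I would first try to handle this with a case analysis on how many gaps of $T$ lie in the window or exceed it. If the single-window rule fails, I would instead impose that the distinguished gap is the merge of two consecutive gaps of specific sizes, so that exactly two deletions realise it. If that too fails, I would allow a bounded number of circles, that is, a product with a small partite structure, to break ties, at a cost of only a constant factor.
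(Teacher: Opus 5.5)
There is a genuine gap. The proposal never produces a rule that satisfies the $0$-or-$2$ condition, and the concrete rule you give fails exactly at the verification step you flag. Take the rule ``$S$ is an edge iff exactly one gap of $S$ lies in $W=[a,b)$ and all other gaps are at most $c$'', with $c<a$ as in your unique-long-gap picture. Let deleting $t_i$ merge $h_{i-1}$ and $h_i$.

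\emph{Too many edges.} Your heuristic says the critical gap can be created in only two ways, but a window gap already present in $T$ survives every deletion that does not touch it. Suppose $h_1\in W$ and all other gaps of $T$ are at most $c/2$. Then each of the $r-1$ deletions of $t_3,\dots,t_{r+1}$ merges two short gaps into a gap of size at most $c$ and keeps $h_1$ as the unique window gap, so each gives an edge. Hence $T$ spans at least $r-1$ edges. Such $T$ exist once $r\lfloor c/2\rfloor\ge n-a$. For large $r$ this holds in every parameter range where your rule has density polynomial in $1/r$, because that density forces $c$ to be at least about $\frac{n-a}{r}\log r$, the typical largest non-window gap.

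\emph{Exactly one edge.} Even when the window gap is created by a merge, the left and right merges are unrelated conditions. Suppose $a-2>c$ and $T$ has consecutive gaps $2,\ a-2,\ 1$, with all other gaps at most $c$. The deletion merging $2$ and $a-2$ creates the gap $a\in W$ and an edge. The deletion merging $a-2$ and $1$ creates $a-1\in(c,a)$ and no edge. Every other deletion leaves the gap $a-2\in(c,a)$ and no edge. So $T$ spans exactly one edge.

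No tuning of $a,b,c$ removes these configurations. The fallbacks you list are too unspecified to check, and the target density $\frac{1}{32r^3}$ is asserted rather than computed.

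The missing idea is to let the pairing come from a unique \emph{core} rather than from adjacency on a circle. Let $\mathcal{G}$ be an $(r-1)$-graph in which every $(r+1)$-set contains at most one member. In its upper shadow, each $(r+1)$-set $T$ spans either no edge or exactly the two edges $T\setminus\{u\}$ and $T\setminus\{v\}$, where $\{u,v\}=T\setminus A$ for the unique core $A\subseteq T$. The paired vertices need not be adjacent in any ordering.

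All the difficulty then lies in building a dense such $\mathcal{G}$. Distinct members must satisfy $|A\triangle B|\ge 6$, so $\mathcal{G}$ is a constant-weight code of minimum distance $6$, with density $O(m^{-2})$ on $m\ge 2r$ vertices. The paper attains density $\frac{1}{4m^2}$ in two ways. One is the Graham--Sloane code bound. The other is the Sidon map $t\mapsto(t,t^2)$: take $\mathcal{G}_z=\{A:\sum_{a\in A}(a,a^2)=z\}$ on an $m$-subset of $\mathbb{F}_q$, with $q\in[m,2m]$ prime and $z\in\mathbb{F}_q^2$ the most popular value.

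Since every $r$-set then contains at most one core, the upper shadow has density $r|\mathcal{G}|/\binom{m}{r-1}\ge\frac{r}{4m^2}$, which equals $\frac{1}{16r^3}$ at $m=2r^2$. A balanced blow-up preserves the $0$-or-$2$ property and, since $m\ge 2r^2$, loses at most a factor $2$. Averaging down to $n$ vertices then gives $\frac{1}{32r^3}\binom nr$.

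So the pairing comes neither from geometry nor from an ordering but from the algebraic core family. The blow-up is used only to pass from $2r^2$ vertices to $n$, and the exponent $3$ and the constant $32$ come from the code density and this blow-up, not from a window computation.
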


We remark that \cref{pro::0or2} is closely related to the classical problem in which every $(r+1)$-set spans at most two edges. Equivalently, this is the Tur\'an problem for $H^r_3$, the unique $r$-graph with three edges on $r+1$ vertices, which is also the $(3,2)$-daisy; the broader daisy problem has recently seen a breakthrough, see~\cite{ellis2024daisies}.
For a hypergraph $H$, let $\pi(H)$ be its Tur\'an density.
When $r=3$, $H^3_3$ is $K_4^-$, the $3$-graph obtained from the clique on $4$ vertices by removing one edge.
Frankl and F\"uredi~\cite{frankl1984exact} gave the lower bound $\pi(K_4^-)\ge 2/7$. 
Baber and Talbot~\cite{baber2011hypergraphs} proved the upper bound $\pi(K_4^-)\le 0.2871$ using flag algebras, and this was later slightly improved to $\pi(K_4^-)\le 0.286889$ by Falgas-Ravry and Vaughan~\cite{falgasravry2013applications}.
For general $r$, the argument of de Caen~\cite{caen1983extension} mentioned above gives $\pi(H^r_3)\le 1/r$, which is also the present general upper bound for \cref{pro::0or2}. On the lower-bound side, the original construction of Frankl and F\"uredi~\cite{frankl1984exact} gives the exponential bound $\pi(H^r_3)\ge 2^{1-r}$; Sidorenko~\cite{sidorenko2024turan} recently proved $\pi(H^r_3)\ge r^{-2}$ for every $r$, and in fact $\pi(H^r_3)\ge (1.7215-o(1))r^{-2}$ as $r\to\infty$; Gunderson and Semeraro~\cite{gunderson2025switching} also obtained the lower bounds
$\pi(H^6_3)\ge \frac{9}{64}$,
$\pi(H^7_3)\ge \frac{35}{2^{11}}$, and
$\pi(H^8_3)\ge \frac{315}{2^{14}}$
via an approach termed tournament switching. More recently, Clemen~\cite{clemen2026applications} improved the asymptotic lower bound to
$
    \pi(H^r_3)=\Omega\left(r^{-2}\sqrt{\log r}\right)
$
using sparse hypergraph colorings.

We remark that progress on the upper bound for \cref{pro::0or2} should also be useful for the at-most-two problem. Indeed, the third author~\cite{mubayi2003hypergraphs} used the exact Frankl--F\"uredi result for \cref{pro::0or2} when $r=3$ to improve de Caen's upper bound for $\pi(K_4^-)$. The idea is that, in an $H^r_3$-free $r$-graph, every $(r+1)$-set spans $0$, $1$, or $2$ edges. If a dense $H^r_3$-free $r$-graph exceeds the corresponding $0/2$ threshold on many small vertex sets, then many of those small sets must contain an $(r+1)$-set spanning exactly one edge. These one-edge local configurations can then be inserted into the extension-counting inequality to improve the upper bound. Therefore, an improved upper bound for \cref{pro::0or2} would, by the same supersaturation mechanism, give an improved upper bound for $\pi(H^r_3)$. 

Our methods in Theorem~\ref{1overr3} apply to the following rather natural generalization of Problem~\ref{pro::0or2}.

\begin{definition} 
For fixed $k\ge 2$, let $M_{\le 2k}(n,r)$ be the maximum number of edges in an $n$-vertex $r$-graph in which the number of edges contained in every $(r+1)$-set lies in  $\{0,2,\ldots,2k\}$.
\end{definition}

For every fixed $r$ and $k$, the limit
\[
    m_{\leq 2k}(r)\ce \lim_{n\to\infty}\frac{M_{\le 2k}(n,r)}{\binom nr}
\]
exists by the standard averaging argument of Katona, Nemetz, and Simonovits~\cite{katona1964problem}. Indeed, if $n\ge m$, then averaging an extremal $n$-vertex example over all $m$-vertex subsets gives
\[
    \frac{M_{\le 2k}(m,r)}{\binom mr}
    \ge
    \frac{M_{\le 2k}(n,r)}{\binom nr}.
\]
Thus the sequence $M_{\leq 2k}(n,r)/\binom nr$ is nonincreasing. We give the following lower bounds for $m_{\le 2k}(r)$.

\begin{theorem} \label{thm::even-counts}
    For every fixed integer $k\ge 2$, there is a constant $c_k>0$ such that, for all sufficiently large $r$,
    \[
        m_{\le 2k} (r)
        \ge 
        c_k r^{-1-4/k}(\log r)^{1/k}.
    \]
\end{theorem}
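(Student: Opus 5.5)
The plan is to reduce the parity condition to a linear-algebra identity over $\mathbb{F}_2$ and then bound the edge counts combinatorially. Let $\G$ be an $(r-1)$-graph on $[n]$, and let $\mH=\delta\G$ be the $r$-graph whose edges are the $r$-sets $S$ containing an \emph{odd} number of edges of $\G$. For an $(r+1)$-set $X$, each $(r-1)$-set $T\subset X$ lies in exactly two $r$-subsets of $X$. Hence
\[
\sum_{S\in\binom{X}{r}}|\{T\in\G: T\subset S\}| = 2\,|\G[X]|,
\]
which is even, so $|\mH[X]|$ is always even. Moreover, an $r$-subset of $X$ is an edge only if it contains some $T\in\G[X]$, so $|\mH[X]|\le 2|\G[X]|$.

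Consequently it suffices to find a dense $(r-1)$-graph $\G$ in which every $(r+1)$-set contains at most $k$ edges of $\G$, while many $r$-sets contain exactly one edge of $\G$. (The same scheme with $k=1$ is what I would use for \cref{1overr3}.) If $\G$ has density $p$ with $rp\ll 1$, and the edges are spread out, then $\mH$ has density about $rp$. The target is therefore $p\approx r^{-2-4/k}(\log r)^{1/k}$.

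The constraint ``at most $k$ edges of $\G$ in every $(r+1)$-set'' cannot be obtained by deleting edges from a random $(r-1)$-graph: the number of bad configurations scales like $n^{r+1}$ against $n^{r-1}$ edges. I would therefore make $\G$ a blow-up of a bounded pattern. Colour the vertices by a map $\phi:[n]\to[m]$ with $m=\mathrm{poly}(r)$, either in a balanced way or at random, and keep an $(r-1)$-set $T$ in $\G$ according to its colour profile. Because $n\ge$ a large multiple of $r^2$ and $m$ is large, most $r$- and $(r+1)$-sets are rainbow, so I would decide membership for rainbow sets only, through an $(r-1)$-graph $\mathcal{P}$ on the colour set $[m]$. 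Non-rainbow sets simply are not edges of $\G$, which only lowers counts. The problem then becomes choosing $\mathcal{P}\subseteq\binom{[m]}{r-1}$ in which every $(r+1)$-subset of $[m]$ contains at most $k$ members of $\mathcal{P}$. This is exactly the original problem with $n$ replaced by $m$, but now $m$ is only polynomial in $r$, and the deletion argument succeeds.

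To obtain the $(\log r)^{1/k}$ gain, I would not use plain deletion. Instead I would form the auxiliary $(k+1)$-uniform hypergraph $\mathcal{A}$ whose vertices are the $(r-1)$-subsets of $[m]$ and whose edges are the $(k+1)$-tuples of them lying in a common $(r+1)$-set. The required $\mathcal{P}$ is then an independent set of $\mathcal{A}$. I would apply an Ajtai--Koml\'os--Pintz--Spencer--Szemer\'edi / Duke--Lefmann--R\"odl type bound for independent sets in uncrowded (or almost linear) $(k+1)$-graphs, after a random sparsification to a density $q$ chosen so that the average degree is $\Theta(1)$ times a constant, and then a cleanup removing short cycles. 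This yields an independent set of relative density about $q\cdot(\log D)^{1/k}$, matching the exponent. Finally I would also need that a positive fraction of $r$-subsets contain exactly one member of $\mathcal{P}$ rather than an odd number at least $3$. In the sparse regime this follows from a second-moment bound, or, more simply, from the fact that sets with two members are a lower-order fraction.

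The main obstacle is the middle step: controlling the codegrees of $\mathcal{A}$. Two $(r-1)$-sets lie in a common $(r+1)$-set only if they share at least $r-3$ elements, so $\mathcal{A}$ is far from linear and has many $2$-cycles. The degree computation gives $D\approx (r^2)^{k}\cdot(\text{density})^{k}$ up to factors involving $m$. I would first try to choose $m\approx r^{C}$ and sparsify so that pair codegrees are $O(D^{(k-1)/k-\epsilon})$, since that is what the uncrowded independence bound needs. If that fails, I would fall back to the random greedy / nibble analysis with the log loss.
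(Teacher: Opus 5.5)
Your odd-shadow parity reduction is correct. It is slightly more flexible than the paper's upper shadow, which also needs that no $r$-set contains two members of the family. Reducing to a pattern $\mathcal{P}\subseteq\binom{[m]}{r-1}$ and blowing up is also the paper's strategy. There is one minor slip: a non-rainbow $(r+1)$-set does not inherit ``at most $k$ edges of $\mathcal{G}$'', since it can contain up to twice as many as there are members of $\mathcal{P}$ in its $r$-set of colours. This is harmless, because $\delta\mathcal{G}$ is exactly the blow-up of $\delta\mathcal{P}$, and an $(r+1)$-set with one repeated colour spans $0$ or $2$ of its edges.

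The genuine gap is that the heart of the theorem is never constructed: a pattern $\mathcal{P}$ with at most $k$ members in every $(r+1)$-set and relative density about $r^{-2-4/k}(\log r)^{1/k}$. You correctly identify the obstruction: two $(r-1)$-sets with $|A\cap B|=r-2$ lie in $m-r$ common $(r+1)$-sets, so the auxiliary $(k+1)$-graph is far from linear. But the remedies you list fail.
\begin{itemize}
\item With $D\asymp m^2r^{2k}p^k$, your scheme gives density about $r\,p\,(\log D/D)^{1/k}\asymp r^{-1}m^{-2/k}(\log D)^{1/k}$, and the blow-up forces $m\gtrsim r^2$. So taking $m\approx r^C$ with $C>2$ loses the exponent $-1-4/k$.
\item The expected codegree of such a pair is about $m(r^2p)^{k-1}\asymp m^{-1+2/k}D^{(k-1)/k}$. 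For $k=2$ (precisely \cref{cor::024-log}) this is $\asymp D^{1/2}=D^{(k-1)/k}$ for every $m$, so the codegree hypothesis you aim for cannot be reached by tuning $m$.
\item Sparsifying to average degree $\Theta(1)$ kills the logarithmic gain.
\item An unspecified random-greedy/nibble fallback is not an argument.
\end{itemize}

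The missing idea is to start from a base family in which no two members lie in a common $r$-set. The paper uses $\mathcal{G}_c=\{A\in\binom{\mathbb{F}_q}{r-1}:\sum_{a\in A}a=c\}$ with $q\in[r^2,2r^2]$ prime. Its density is at least $1/q$, and the pattern size is $\Theta(r^2)$, exactly what your own accounting requires. The key consequences are:
\begin{itemize}
\item Two members inside a common $(r+1)$-set $S$ satisfy $S=A\cup B$.
\item The members inside $S$ correspond to pairwise disjoint pairs of $S$ with a common sum (\cref{lem::core-local}), which yields $B_{k+1},B_{k+2}\le C_kr^4|\mathcal{G}_c|$.
\item After sparsifying at rate $\alpha r^{-4/(k+1)}$ and deleting one member from each bad $(k+2)$-collection, the auxiliary hypergraph is automatically linear with average degree $O(r^{4/(k+1)})$, so Duke--Lefmann--R\"odl applies directly.
\end{itemize}
Within your purely random framework you could instead keep $m=\Theta(r^2)$, choose $D=r^{\epsilon}$ for a small fixed $\epsilon>0$, and delete one member from every $2$-cycle. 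This works because the expected number of $2$-cycles per member is $O\bigl(r^{-3+4/k}D^{2-1/k}+r^{-4/k}D^{1+1/k}\bigr)=o(1)$. That computation is exactly what your proposal lacks.
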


For example, the special case $k=2$ gives the following logarithmic improvement over the order $r^{-3}$ bound in \cref{1overr3}.
\begin{corollary} \label{cor::024-log}
    There is an absolute constant $c>0$ such that, for all sufficiently large $r$,
    \[
        m_{\le 4} (r)
        \ge 
        c r^{-3}\sqrt{\log r}.
    \]
\end{corollary}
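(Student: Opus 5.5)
The corollary is the special case $k=2$ of \cref{thm::even-counts}, so the plan is simply to specialize that theorem. Set $k=2$. Then the exponent becomes $-1-4/k=-1-2=-3$, and the logarithmic factor becomes $(\log r)^{1/k}=(\log r)^{1/2}=\sqrt{\log r}$.

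\cref{thm::even-counts} then provides a constant $c_2>0$ and a threshold $r_0$ such that for all $r\ge r_0$
\[
    m_{\le 4}(r)\ge c_2\, r^{-3}\sqrt{\log r}.
\]
Taking $c\ce c_2$ gives the claimed bound, and the phrase ``for all sufficiently large $r$'' is inherited directly from the theorem. Here $m_{\le 4}(r)$ is the limiting density for $(r+1)$-sets spanning $0$, $2$ or $4$ edges, which is exactly the quantity bounded in \cref{thm::even-counts} at $k=2$. The limit exists by the Katona--Nemetz--Simonovits averaging argument recalled before the theorem, so nothing further is needed.

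There is no real obstacle here; all the substance lies in the proof of \cref{thm::even-counts}. For context, the point of the corollary is the comparison with \cref{1overr3}. Any $r$-graph in which every $(r+1)$-set spans $0$ or $2$ edges is admissible for $M_{\le 4}(n,r)$, so \cref{1overr3} already gives $m_{\le 4}(r)\ge \frac{1}{32r^3}$. The corollary improves this by a factor of order $\sqrt{\log r}$ when the count $4$ is also allowed.
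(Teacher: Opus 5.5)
Your proposal is correct and matches the paper, which likewise obtains \cref{cor::024-log} by setting $k=2$ in \cref{thm::even-counts}, so that $r^{-1-4/k}(\log r)^{1/k}$ becomes $r^{-3}\sqrt{\log r}$. Your side remark that \cref{1overr3} already gives $m_{\le 4}(r)\ge \frac{1}{32r^3}$ is also correct: the condition ``$0$ or $2$ edges'' is a special case of ``$0$, $2$ or $4$ edges''.
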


The proof of \cref{thm::even-counts} is inspired by Clemen's recent improvement~\cite{clemen2026applications} for the Tur\'an density of $H^r_3$. In both arguments, one first constructs a dense family of cores, then encodes the remaining bad local configurations as edges of an auxiliary sparse hypergraph, and finally passes to a large independent subfamily. In our setting, the auxiliary hypergraph is a linear hypergraph, so a well-known theorem of Duke, Lefmann, and R\"odl~\cite{duke1995uncrowded} is sufficient.

The remaining part of this paper is organized as follows. In \cref{sec::pro}, we give the proof of \cref{1overr3}. In \cref{sec::even-counts}, we prove \cref{thm::even-counts,cor::024-log}. In \cref{sec::Con}, we give some concluding remarks on related problems.

\section[Proof of Theorem~\ref{1overr3}]{Proof of \cref{1overr3}} \label{sec::pro}
The main difficulty in the construction for \cref{1overr3} is that the property that every $(r+1)$-set spans exactly $0$ or $2$ edges imposes strong restrictions on the $r$-graph. Our main observation is the following lemma which enables us to construct a large collection of $r$-graphs that satisfy this property.

For an $(r-1)$-graph $\mH$, its \emph{upper shadow} $\usH$ is the $r$-graph on the same vertex set as $\mH$ whose edges are the $r$-sets containing at least one edge in $\mH$.
\begin{lemma} \label{goodUpperShadow}
    For every $(r-1)$-graph $\mH$ with the property that every $(r+1)$-set of vertices spans at most one edge, $\usH$ has the property that every $(r+1)$-set of vertices spans exactly $0$ or $2$ edges.
\end{lemma}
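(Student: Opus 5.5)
Fix an $(r+1)$-set $S$ of vertices and count the edges of $\usH$ inside $S$ directly. By hypothesis, $S$ contains at most one edge of $\mH$, so there are two cases.

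If $S$ contains no edge of $\mH$, then no $r$-subset of $S$ contains an edge of $\mH$. Hence no $r$-subset of $S$ lies in $\usH$, and $S$ spans $0$ edges.

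If $S$ contains exactly one edge $e\in\mH$, then $|e|=r-1$ and $S\setminus e=\{x,y\}$ for two distinct vertices $x,y$. Every $r$-subset of $S$ has the form $S\setminus\{v\}$ for some $v\in S$. Such a set is an edge of $\usH$ if and only if it contains some edge of $\mH$. Any such edge of $\mH$ lies inside $S$, so it must be $e$ itself. Therefore $S\setminus\{v\}\in\usH$ if and only if $e\subseteq S\setminus\{v\}$, which holds if and only if $v\notin e$, that is, $v\in\{x,y\}$. So exactly the two sets $S\setminus\{x\}$ and $S\setminus\{y\}$ are edges, and $S$ spans exactly $2$ edges.

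Nothing here is a real obstacle. The only point needing care is the observation that an edge of $\mH$ witnessing $S\setminus\{v\}\in\usH$ must lie inside $S$. This is what lets the hypothesis ``at most one edge of $\mH$ in $S$'' pin down the count exactly, rather than merely giving a bound.
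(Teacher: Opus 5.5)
Your proof is correct and follows the same approach as the paper: a case split on whether $S$ contains zero or one edge of $\mH$, with exactly the two $r$-subsets $S\setminus\{x\}$ and $S\setminus\{y\}$ containing the unique edge in the second case. Your explicit remark that any witness edge for $S\setminus\{v\}\in\usH$ must lie inside $S$ spells out a step the paper leaves implicit.
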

\begin{proof}
    Let $S$ be an arbitrary $(r+1)$-set of vertices. By assumption, $S$ spans $0$ or $1$ edge in $\mH$. If $S$ spans no edge in $\mH$, then $S$ spans no edge in $\usH$. If $S$ spans one edge in $\mH$, then exactly two $r$-subsets of $S$ contain this edge, and hence $S$ spans exactly two edges in $\usH$.
\end{proof}

We use the following standard terminology. If \(\mH_0\) is an \(r\)-graph
with vertex set \([m]\), then a \emph{blow-up} of \(\mH_0\) is obtained by
replacing each vertex \(i\in[m]\) by a vertex class \(V_i\), and replacing
each edge \(\{i_1,\ldots,i_r\}\in \mH_0\) by all transversal \(r\)-sets
with one vertex in each of \(V_{i_1},\ldots,V_{i_r}\). 
The blow-up is \emph{balanced} if the classes \(V_1,\ldots,V_m\) have sizes differing by at most one.

\begin{lemma} \label{goodBlowUp}
    Let $\mH_0$ be an $r$-graph with the property that every $(r+1)$-set of vertices spans exactly $0$ or $2$ edges. Then every blow-up of $\mH_0$ also has this property.
\end{lemma}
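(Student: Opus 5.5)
Let $\mH$ be a blow-up of $\mH_0$ with classes $V_1,\ldots,V_m$, and let $\phi\colon V(\mH)\to[m]$ send each vertex to the index of its class. An $r$-set $e$ is an edge of $\mH$ exactly when $\phi$ is injective on $e$ and $\phi(e)\in\mH_0$. The plan is to fix an arbitrary $(r+1)$-set $S$ and split into cases according to the size of its image $\phi(S)$.

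\emph{Case $|\phi(S)|=r+1$.} Here $\phi$ is injective on $S$, so it gives a bijection between the $r$-subsets of $S$ and the $r$-subsets of $\phi(S)$. Under this bijection, edges of $\mH$ in $S$ correspond exactly to edges of $\mH_0$ in $\phi(S)$. By hypothesis on $\mH_0$, this number is $0$ or $2$.

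\emph{Case $|\phi(S)|=r$.} Exactly two vertices $u,v\in S$ lie in a common class, and every other class meets $S$ at most once. Any $r$-subset of $S$ containing both $u$ and $v$ is not transversal, so it is not an edge. The remaining $r$-subsets are $S\setminus\{u\}$ and $S\setminus\{v\}$. Both are transversal and both map onto the same set $\phi(S)$, so either both are edges or neither is. This gives $0$ or $2$ edges.

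\emph{Case $|\phi(S)|\le r-1$.} Every $r$-subset of $S$ omits only one vertex of $S$, so its image still has size at most $r-1$ and it cannot be transversal. Hence $S$ spans $0$ edges.

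No step is a real obstacle. The only point needing care is the case $|\phi(S)|=r$, where one must observe that both candidate edges have the same image and so stand or fall together.
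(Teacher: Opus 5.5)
Your proposal is correct and follows essentially the same argument as the paper: a case split on $|\pi(S)|\in\{\le r-1,\ r,\ r+1\}$. The key observation in both is that, when $|\pi(S)|=r$, the only two transversal $r$-subsets $S\setminus\{u\}$ and $S\setminus\{v\}$ share the same image, so they are both edges or both non-edges.
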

\begin{proof}
    Let $\mH$ be a blow-up of $\mH_0$, and let $\pi$ be the natural projection from $V(\mH)$ to $V(\mH_0)$. For every $(r+1)$-set $S\subseteq V(\mH)$, consider $|\pi(S)|$. If $|\pi(S)|\le r-1$, then $S$ spans no edge in $\mH$. If $|\pi(S)|=r+1$, then all vertices of $S$ lie in distinct classes, and hence
    \[
        |\mH[S]|=|\mH_0[\pi(S)]|\in\{0,2\}.
    \]
    Finally, suppose that $|\pi(S)|=r$. Then exactly one class contributes two vertices of $S$, and the other $r-1$ classes contribute one vertex each. If $\pi(S)$ is not an edge of $\mH_0$, then $S$ spans no edge in $\mH$. If $\pi(S)$ is an edge of $\mH_0$, then the only edges of $\mH$ inside $S$ are obtained by choosing one of the two vertices in the repeated class together with the unique vertex from each of the other $r-1$ classes, so $S$ spans exactly two edges.
\end{proof}

\begin{lemma} \label{sizeBlowUp}
    Let \(N,m,r\) be integers such that \(r\ge 2\), \(m\ge 2r^2\), and
    \(m\mid N\). Let \(\alpha\) be a positive real number. Suppose
    \(\mH_0\) is an \(r\)-graph with \(m\) vertices and
    \(\alpha\binom mr\) edges. Then there is an \(N\)-vertex blow-up
    \(\mH_N\) of \(\mH_0\) such that
    \[
        |\mH_N|\ge \frac{\alpha}{2}\binom Nr.
    \]
\end{lemma}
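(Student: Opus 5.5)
We take the balanced blow-up in which every class has size $t\ce N/m$, and compute its size directly. Each edge of $\mH_0$ gives $t^r$ transversal $r$-sets, and distinct edges of $\mH_0$ give disjoint sets of edges. Hence
\[
    |\mH_N| = \alpha\binom mr t^r .
\]
It therefore suffices to prove
\[
    \binom mr t^r \ge \frac12\binom{N}{r}, \qquad N=mt.
\]

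Write $\binom mr = \frac{m^r}{r!}\prod_{i=0}^{r-1}\bigl(1-\frac im\bigr)$, and use $\binom{N}{r}\le \frac{N^r}{r!}=\frac{m^rt^r}{r!}$. Then
\[
    \frac{\binom mr t^r}{\binom Nr}\ge \prod_{i=0}^{r-1}\Bigl(1-\frac im\Bigr).
\]
It remains to show that this product is at least $1/2$.

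By the Weierstrass product inequality, $\prod(1-x_i)\ge 1-\sum x_i$ for $x_i\in[0,1]$. Applying it gives
\[
    \prod_{i=0}^{r-1}\Bigl(1-\frac im\Bigr)\ge 1-\frac{r(r-1)}{2m}.
\]
Since $m\ge 2r^2$, we have $\frac{r(r-1)}{2m}\le \frac{r^2}{4r^2}=\frac14$, so the product is at least $3/4\ge 1/2$. The product inequality follows by a trivial induction. This gives $|\mH_N|\ge\frac{\alpha}{2}\binom Nr$, with room to spare.

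No step here is a real obstacle. The only point needing care is to compare against the falling-factorial form of $\binom mr$ rather than a cruder estimate, and the hypothesis $m\ge 2r^2$ is exactly what makes the correction factor bounded below by a constant.
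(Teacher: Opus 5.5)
Your proof is correct and matches the paper's argument: the same equal-class blow-up with $|\mH_N|=\alpha\binom mr (N/m)^r$, the same bound $\prod_{i=0}^{r-1}(1-i/m)\ge 1-\sum_i i/m\ge 3/4$ from $m\ge 2r^2$, and the same comparison $\binom Nr\le N^r/r!$. Your remark that distinct edges of $\mH_0$ yield disjoint sets of transversal edges is the one detail the paper leaves implicit.
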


\begin{proof}
    Replace each vertex of \(\mH_0\) by a set of size \(N/m\), where distinct vertices are replaced by disjoint sets. Then
    \[
        |\mH_N|
        =
        \alpha\binom mr\left(\frac Nm\right)^r.
    \]
    Since \(m\ge 2r^2\), we have
    \[
        \binom mr
        =
        \frac{m^r}{r!}\prod_{i=0}^{r-1}\left(1-\frac{i}{m}\right)
        \ge
        \frac{m^r}{r!}
        \left(1-\sum_{i=0}^{r-1}\frac{i}{m}\right)
        \ge
        \frac34\frac{m^r}{r!}.
    \]
    Therefore
    \[
        |\mH_N|
        \ge
        \frac{3\alpha}{4}\frac{N^r}{r!}
        \ge
        \frac{\alpha}{2}\binom Nr,
    \]
    as claimed.
\end{proof}

\begin{lemma} \label{averagingDown}
    Let \(N\ge n\ge r\), and let \(\beta>0\). Suppose that \(\mH_N\) is an
    \(N\)-vertex \(r\)-graph such that every \((r+1)\)-set spans exactly
    \(0\) or \(2\) edges and
    \[
        |\mH_N|\ge \beta\binom Nr.
    \]
    Then there is an \(n\)-vertex \(r\)-graph \(\mH\) such that every
    \((r+1)\)-set spans exactly \(0\) or \(2\) edges and
    \[
        |\mH|\ge \beta\binom nr.
    \]
\end{lemma}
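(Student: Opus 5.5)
The plan is the standard averaging argument of Katona, Nemetz, and Simonovits, applied to induced subgraphs of \(\mH_N\). First observe that the property ``every \((r+1)\)-set spans exactly \(0\) or \(2\) edges'' is hereditary. If \(U\subseteq V(\mH_N)\) and \(S\subseteq U\) is an \((r+1)\)-set, then \(\mH_N[U][S]=\mH_N[S]\). Hence every induced subgraph \(\mH_N[U]\) on an \(n\)-set \(U\) inherits the property, and it suffices to find one \(n\)-set \(U\) with \(|\mH_N[U]|\ge \beta\binom nr\).

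To find such a \(U\), choose \(U\) uniformly at random among all \(n\)-subsets of \(V(\mH_N)\) and compute \(\E|\mH_N[U]|\) by linearity. A fixed edge \(e\in\mH_N\) lies in \(U\) with probability
\[
\binom{N-r}{n-r}\Big/\binom Nn=\binom nr\Big/\binom Nr .
\]
This follows from the double-counting identity \(\binom Nn\binom nr=\binom Nr\binom{N-r}{n-r}\), which holds because both sides count pairs (\(r\)-set \(\subseteq\) \(n\)-set). Therefore
\[
\E|\mH_N[U]|=|\mH_N|\,\frac{\binom nr}{\binom Nr}\ge \beta\binom nr .
\]
Some \(n\)-set \(U\) therefore attains at least the expectation, and we take \(\mH=\mH_N[U]\), relabelled to have vertex set \([n]\) if desired. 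The hypothesis \(n\ge r\) ensures \(\binom nr\ge 1\), so these quantities are well defined and the probability formula makes sense.

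There is no real obstacle here; the only points needing care are the hereditary nature of the property and the counting identity. Both are immediate, so the proof should be a few lines.
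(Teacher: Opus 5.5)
Your proposal is correct and follows the paper's proof: a uniformly random $n$-subset $U$, linearity of expectation with each edge lying in $U$ with probability $\binom nr/\binom Nr$, and the fact that the $0$-or-$2$ property passes to induced subgraphs. Your double-counting justification of the probability formula is a detail the paper leaves implicit.
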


\begin{proof}
    Choose an \(n\)-vertex subset \(U\subseteq V(\mH_N)\) uniformly at
    random. Each edge of \(\mH_N\) is contained in \(U\) with probability
    \(\binom nr/\binom Nr\). Hence
    \[
        \mathbb E|\mH_N[U]|
        =
        |\mH_N|\frac{\binom nr}{\binom Nr}
        \ge
        \beta\binom nr.
    \]
    Therefore some choice of \(U\) satisfies
    \[
        |\mH_N[U]|\ge \beta\binom nr.
    \]
    Since the property that every \((r+1)\)-set spans exactly \(0\) or \(2\)
    edges is inherited by induced subgraphs, \(\mH=\mH_N[U]\) has the
    desired local property.
\end{proof}
We also need the following construction, for which we give two proofs.
\begin{proposition} \label{goodr-1}
    For every uniformity $r \ge 2$ and integer $n \ge r$, there is an $n$-vertex $(r-1)$-graph $\mH$ with the property that every $(r+1)$-set of vertices spans at most one edge and
    \[
        |\mH| \ge \frac{1}{4n^2} \b{n}{r-1}.
    \]
\end{proposition}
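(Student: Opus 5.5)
The plan is to first reformulate the local condition as a packing condition, and then meet it with an algebraic construction together with pigeonhole.

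\textbf{Reformulation.} Two distinct $(r-1)$-sets $e,f$ lie together in some $(r+1)$-set exactly when $|e\cup f|\le r+1$, i.e.\ $|e\cap f|\ge r-3$ (we use $n\ge r+1$; if $n=r$ there are no $(r+1)$-sets and any family works). So the property required in \cref{goodr-1} is equivalent to: any two distinct edges of $\mH$ share at most $r-4$ vertices. The counting heuristic $\binom{n}{r-3}/\binom{r-1}{2}\approx \frac{2}{n^2}\binom{n}{r-1}$ suggests that the target $\frac{1}{4n^2}\binom n{r-1}$ is a constant factor from optimal. A partition of all $(r-1)$-sets into about $p^2$ classes, each satisfying the packing condition, would then suffice.

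\textbf{Construction.} Dispose of tiny cases by hand. For $r=2$ a single vertex is a valid $1$-graph, and $n/(4n^2)\le 1$. For $r=3$ a single edge is a valid $2$-graph, and $\binom n2/(4n^2)<1$. For $n\le 2$ there is nothing to do.

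Otherwise, by Bertrand's postulate choose an odd prime $p$ with $n\le p\le 2n$, and identify $V=[n]$ with distinct elements of $\mathbb F_p$. For $(a,b)\in\mathbb F_p^2$ let
\[
\mathcal F_{a,b}=\Bigl\{e\in\tbinom{V}{r-1}:\ \textstyle\sum_{x\in e}x=a,\ \sum_{x\in e}x^2=b\Bigr\}.
\]
These $p^2$ classes partition $\binom V{r-1}$. By pigeonhole some class has size at least $\binom n{r-1}/p^2\ge \frac1{4n^2}\binom n{r-1}$, and we take $\mH$ to be that class.

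\textbf{Verification.} Take distinct $e,f\in\mathcal F_{a,b}$ and suppose $|e\cap f|\ge r-3$. Since $|e|=|f|$, the set differences $e\setminus f$ and $f\setminus e$ both have size $1$ or $2$.
\begin{itemize}[leftmargin=*]
\item If the size is $1$, say $e\setminus f=\{x\}$ and $f\setminus e=\{x'\}$, then equal sums give $x=x'$, a contradiction.
\item If the size is $2$, say $e\setminus f=\{x,y\}$ and $f\setminus e=\{x',y'\}$, then $x+y=x'+y'=s$ and $x^2+y^2=x'^2+y'^2$. Because $p$ is odd, $xy=(s^2-(x^2+y^2))/2=x'y'$. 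Hence $\{x,y\}$ and $\{x',y'\}$ are both the root set of $t^2-st+xy$, so they coincide, again a contradiction.
\end{itemize}
Thus edges pairwise share at most $r-4$ vertices, as required.

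\textbf{Main obstacle.} The only delicate point is the size-$2$ case of the verification. It is what forces using both the first and second power sums and an odd characteristic, so that a $2$-set is determined by these two invariants. After that, the bound is just pigeonhole combined with $p\le 2n$.
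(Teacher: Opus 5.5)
Your proposal is correct and is essentially the paper's second proof: you partition the $(r-1)$-sets into $p^2$ classes by the value of $\sum_{x\in e}(x,x^2)\in\mathbb F_p^2$, take the largest class by pigeonhole, and use that $t\mapsto(t,t^2)$ is a Sidon map in odd characteristic. The only difference is cosmetic: the paper compares the two complementary pairs $S\setminus A$ and $S\setminus B$ inside a fixed $(r+1)$-set $S$, which avoids your size-$1$/size-$2$ case split (the paper also gives a first proof via Graham--Sloane constant-weight codes).
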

We remark that, in the range $n\ge 2r$,  \cref{goodr-1} is best possible up to the absolute constant. Indeed, suppose that $r\ge 3$, $n\ge 2r$, and that an $(r-1)$-graph $\mF$ on $n$ vertices has the property that every $(r+1)$-set spans at most one edge. If two distinct edges of $\mF$ contain the same $(r-3)$-set, then their union would have size at most $r+1$, and hence some $(r+1)$-set would contain both of them, a contradiction. Therefore every $(r-3)$-set is contained in at most one edge of $\mF$, and so
\[
    |\mF|\binom{r-1}{2}\le \binom{n}{r-3}.
\]
Equivalently,
\[
    |\mF|\le \frac{2}{(n-r+2)(n-r+3)}\binom{n}{r-1}.
\]
In particular, if $n\ge 2r$, then $|\mF|\le \frac{8}{n^2}\binom{n}{r-1}$.

\begin{proof} [First proof of \cref{goodr-1}]
    We use the following theorem of Graham and Sloane~\cite{graham1980lower}.

    Let $A(n,2\delta,w)$ be the maximum number of codewords in any binary code of length $n$, constant weight $w$, and minimum Hamming distance at least $2\delta$.

\begin{theorem}[Theorem~4 in~\cite{graham1980lower}] \label{grahamCode}
    Let $q$ be a prime power such that $q \ge n$. Then,
    \[
        A(n,2\delta,w) \ge \frac{1}{q^{\delta-1}} \b{n}{w}.
    \]
\end{theorem}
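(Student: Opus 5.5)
The plan is a pigeonhole argument over an algebraic labelling of the $w$-subsets of $[n]$. Since $q\ge n$, we can fix an injection $i\mapsto \alpha_i$ from $[n]$ into $\mathbb{F}_q$; zero is allowed as a value. A binary word of length $n$ and weight $w$ is the same as a $w$-set $S\subseteq[n]$. To such a set I attach the monic polynomial
\[
f_S(x)=\prod_{i\in S}(x-\alpha_i)=x^w-e_1(S)x^{w-1}+e_2(S)x^{w-2}-\cdots
\]
over $\mathbb{F}_q$. Its label is the vector $\bigl(e_1(S),\ldots,e_{\delta-1}(S)\bigr)\in\mathbb{F}_q^{\delta-1}$, i.e.\ the $\delta-1$ coefficients of $f_S$ just below the leading term.

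There are at most $q^{\delta-1}$ labels. By pigeonhole, some label class contains at least $q^{-(\delta-1)}\binom{n}{w}$ sets. It then suffices to show that any two distinct $w$-sets $S,T$ with the same label are at Hamming distance at least $2\delta$.

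Write $C=S\cap T$, $A=S\setminus T$ and $B=T\setminus S$, so $|A|=|B|=t\ge 1$ and the Hamming distance is $2t$. Suppose for contradiction that $t\le \delta-1$. Then $f_S=f_Cf_A$ and $f_T=f_Cf_B$, where $f_A,f_B$ are monic of degree $t$ and $f_C$ has degree $w-t$. Equal labels mean that $f_S-f_T=f_C(f_A-f_B)$ has degree at most $w-\delta$. If $f_A\ne f_B$, then $f_A-f_B$ is a nonzero polynomial, because $\mathbb{F}_q[x]$ is an integral domain. Hence
\[
\deg\bigl(f_C(f_A-f_B)\bigr)\ge w-t\ge w-\delta+1,
\]
which is a contradiction. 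So $f_A=f_B$. Since the $\alpha_i$ are distinct, unique factorization then forces $A=B$, contradicting $A\cap B=\emptyset$ with $t\ge1$. Therefore $t\ge\delta$, and the class is a code with minimum distance at least $2\delta$.

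The main obstacle I expect is choosing the right invariant. The more naive choice is power sums $\sum_{i\in S}\alpha_i^j$, which are additive and would make the difference argument look easy. But in characteristic $p$, Newton's identities break down, so equal power sums do not force equal symmetric functions. Using the coefficients of $f_S$ directly, together with the degree-counting argument above, avoids this issue entirely and works for any prime power $q\ge n$.
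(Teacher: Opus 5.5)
Your proof is correct. Labelling each $w$-set by $(e_1(S),\dots,e_{\delta-1}(S))$ works: the factorization $f_S-f_T=f_C(f_A-f_B)$ together with $\deg(f_S-f_T)\le w-\delta$ rules out $1\le t\le\delta-1$, and pigeonhole over the $q^{\delta-1}$ labels finishes. One cosmetic point: the integral-domain property is what gives $\deg(f_C(f_A-f_B))=\deg f_C+\deg(f_A-f_B)$; the fact that $f_A-f_B\ne 0$ follows immediately from $f_A\ne f_B$.

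The paper contains no proof of this statement. It is quoted from Graham and Sloane and used as a black box, with $\delta=3$ and $w=r-1$, in the first proof of \cref{goodr-1}. The closest argument in the paper is the second proof of \cref{goodr-1}, which is in effect the $\delta=3$ instance of your pigeonhole scheme with two substitutions. It uses the additive label $\sum_{a\in A}(a,a^2)$ over an odd prime field instead of elementary symmetric functions. It uses the Sidon property of $t\mapsto(t,t^2)$ instead of your degree count: two members inside a common $(r+1)$-set $S$ have complementary pairs $\{u,v\},\{u',v'\}$ with $\lambda(u)+\lambda(v)=\lambda(u')+\lambda(v')$, which forces $\{u,v\}=\{u',v'\}$. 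That route is shorter and purely additive, but it needs $q$ odd to recover $ab$ from $a+b$ and $a^2+b^2$. Yours handles every $\delta$ and every prime power, including characteristic $2$, which the statement as quoted requires.

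One correction to your final paragraph. Newton's identities only require dividing by $1,\dots,\delta-1$. So equal power sums $p_1,\dots,p_{\delta-1}$ do force equal $e_1,\dots,e_{\delta-1}$ whenever the characteristic exceeds $\delta-1$. The power-sum approach breaks down only in small characteristic, which is exactly why the paper can use it for $\delta=3$ with $q$ odd.
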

    Now let $\delta = 3$, $w = r-1$, and let $q$ be a prime number in $[n,2n]$. By \cref{grahamCode}, we have
    \[
        A(n,6,r-1) \ge \frac{1}{q^{3-1}} \b{n}{r-1} \ge \frac{1}{4n^2} \b{n}{r-1}.
    \]
    Viewing the codewords as the indicator vectors of the $(r-1)$-edges, we get an $(r-1)$-graph $\mH$. Suppose for a contradiction that there is an $(r+1)$-set $S$ spanning at least two edges, say $e_1,e_2$. Since $|e_1|=|e_2|=r-1$ and $e_1,e_2\subseteq S$, we have $|e_1\cap e_2|\ge r-3$. Hence, the corresponding codewords for $e_1,e_2$ have Hamming distance at most $4$, a contradiction.
\end{proof}

\begin{proof}[Second proof of \cref{goodr-1}]
    For $r=2$, taking a single vertex as the only $1$-edge gives the desired construction. Hence, we may assume $r\ge 3$.

    Let $q$ be a prime number in $[n,2n]$, and identify the vertex set with an arbitrary $n$-element subset $U\subseteq \mathbb F_q$. Define
    \[
        \lambda:\mathbb F_q\to \mathbb F_q^2,\qquad
        \lambda(t)=(t,t^2).
    \]
    Note that if
    \[
        \lambda(a)+\lambda(b)=\lambda(c)+\lambda(d),
    \]
    then $a+b=c+d$ and $a^2+b^2=c^2+d^2$. Since $q$ is odd, this implies $ab=cd$, and then $\{a,b\}$ and $\{c,d\}$ have the same sum and product, so they are equal.

    For each $z\in \mathbb F_q^2$, define
    \[
        \G_z=\left\{A\in \binom{U}{r-1}: \sum_{a\in A}\lambda(a)=z\right\}.
    \]
    The families $\G_z$ partition $\binom{U}{r-1}$ as $z$ ranges over $\mathbb F_q^2$. Hence, by averaging, there exists $z\in \mathbb F_q^2$ such that
    \[
        |\G_z|\ge \frac{1}{q^2}\binom{n}{r-1}
        \ge \frac{1}{4n^2}\binom{n}{r-1}.
    \]
    We claim that this $\G_z$ has the required local property.

    Let $S\in \binom{U}{r+1}$. Suppose that $A,B\in \G_z$ and $A,B\subseteq S$. Write
    \[
        S\setminus A=\{u,v\},\qquad S\setminus B=\{u',v'\}.
    \]
    Since $\sum_{a\in A}\lambda(a)=\sum_{b\in B}\lambda(b)=z$, we have
    \[
        \lambda(u)+\lambda(v)
        =
        \sum_{s\in S}\lambda(s)-z
        =
        \lambda(u')+\lambda(v').
    \]
    By the Sidon property of $\lambda$, we get $\{u,v\}=\{u',v'\}$, and hence $A=B$. Therefore, every $(r+1)$-set contains at most one member of $\G_z$, as claimed.
\end{proof}

\begin{proof}[Proof of \cref{1overr3}]
    By \cref{goodr-1}, there exists an $(r-1)$-graph $\mH^{(r-1)}$ with $2r^2$ vertices and at least $\frac{1}{16r^4}\b{2r^2}{r-1}$ edges such that every $(r+1)$-set of vertices spans at most one edge. Let $\mH_0 = \us(\mH^{(r-1)})$. By \cref{goodUpperShadow}, every $(r+1)$-set of vertices in $\mH_0$ spans exactly $0$ or $2$ edges.

    \begin{claim}
        $|\mH_0| \ge \frac{1}{16r^3} \b{2r^2}{r}$.
    \end{claim}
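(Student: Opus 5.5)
We count edges of $\mH_0=\us(\mH^{(r-1)})$ by double counting pairs $(e,f)$ with $e\in\mH^{(r-1)}$, $f\in\mH_0$ and $e\subseteq f$. Write $m=2r^2$. Each $e\in\mH^{(r-1)}$ extends to exactly $m-r+1$ distinct $r$-sets $f=e\cup\{v\}$ with $v\notin e$, and each of these lies in $\mH_0$. The key point is that each $f\in\mH_0$ contains \emph{exactly one} edge of $\mH^{(r-1)}$. Indeed, if $f$ contained two distinct edges $e_1,e_2$, then $f$ together with any extra vertex would be an $(r+1)$-set (one exists since $m>r$) spanning two edges of $\mH^{(r-1)}$, contradicting the local property from \cref{goodr-1}. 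So the upper-shadow map is injective on these pairs, and
\[
|\mH_0| = (m-r+1)\,|\mH^{(r-1)}| \ge (m-r+1)\frac{1}{16r^4}\binom{m}{r-1}.
\]

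It remains to compare $\binom{m}{r-1}$ with $\binom{m}{r}$. We use $\binom{m}{r-1}=\frac{r}{m-r+1}\binom{m}{r}$, which gives
\[
|\mH_0|\ge \frac{r}{16r^4}\binom{m}{r}=\frac{1}{16r^3}\binom{2r^2}{r},
\]
exactly the claimed bound. Here \cref{goodr-1} is applied with $n=2r^2\ge r$, so its hypothesis holds. Its bound $\frac{1}{4n^2}=\frac{1}{16r^4}$ is the one quoted in the proof of \cref{1overr3}.

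There is no real obstacle. The only care needed is the uniqueness step, which guarantees that no shadow edge is counted twice.
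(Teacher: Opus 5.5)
Your proposal is correct and matches the paper's proof. The steps are the same: uniqueness of the contained $(r-1)$-edge by extending an $r$-set to an $(r+1)$-set (possible since $2r^2\ge r+1$), the exact count $|\mH_0|=(2r^2-r+1)\,|\mH^{(r-1)}|$, and the identity $\binom{m}{r-1}=\frac{r}{m-r+1}\binom{m}{r}$.
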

        \begin{proof}
        We first note that every $r$-set contains at most one edge of
        $\mH^{(r-1)}$. Indeed, if an $r$-set $S$ contained two distinct
        edges $A,B\in \mH^{(r-1)}$, then, since $\mH^{(r-1)}$ has
        $2r^2\ge r+1$ vertices, we could choose a vertex outside $S$ and
        extend $S$ to an $(r+1)$-set containing both $A$ and $B$, which would contradict the defining property of $\mH^{(r-1)}$.

        Therefore, each edge of $\mH_0=\us(\mH^{(r-1)})$ contains
        a unique edge of $\mH^{(r-1)}$. Since each $(r-1)$-edge of
        $\mH^{(r-1)}$ is contained in exactly $2r^2-(r-1)$ many $r$-sets,
        we have
        \begin{align*}
            |\mH_0|
            &= (2r^2-(r-1)) |\mH^{(r-1)}| \\
            &\ge \frac{2r^2-r+1}{16r^4}\binom{2r^2}{r-1}
             = \frac{2r^2-r+1}{16r^4}\cdot
               \frac{r}{2r^2-r+1}\binom{2r^2}{r} \\
            &= \frac{1}{16r^3}\binom{2r^2}{r}. \qedhere
        \end{align*}
    \end{proof}
        Choose an integer \(N\ge n\) divisible by \(2r^2\). Applying
    \cref{sizeBlowUp} with \(m=2r^2\) and \(\alpha=1/(16r^3)\), we obtain
    an \(N\)-vertex blow-up \(\mH_N\) of \(\mH_0\) such that
    \[
        |\mH_N|\ge \frac{1}{32r^3}\binom Nr.
    \]
    By \cref{goodBlowUp}, every \((r+1)\)-set of vertices in \(\mH_N\)
    spans exactly \(0\) or \(2\) edges. Applying \cref{averagingDown} with
    \(\beta=1/(32r^3)\), we obtain an \(n\)-vertex \(r\)-graph \(\mH\) such
    that every \((r+1)\)-set spans exactly \(0\) or \(2\) edges and
    \[
        |\mH|\ge \frac{1}{32r^3}\binom nr. \qedhere
    \]

\end{proof}

\section{Proofs for relaxed even local counts} \label{sec::even-counts}

We prove \cref{thm::even-counts}; \cref{cor::024-log} then follows by taking $k=2$. Throughout this section \(k\ge 2\) is fixed, and \(c_k,C_k>0\) denote constants depending only on \(k\), whose values may change from line to line.
We assume that \(r\) is sufficiently large in terms of \(k\); in particular, \(r+1\ge 2k+4\).
Let \(q\) be an odd prime satisfying
\(
    r^2\le q\le 2r^2,
\)
which exists by Bertrand's postulate. 
For every $c\in\mathbb F_q$, define
\[
    \G_c=\left\{A\in\binom{\mathbb F_q}{r-1}:\sum_{a\in A}a=c\right\}.
\]
By averaging over $c\in\mathbb F_q$, we may fix $c$ such that
\begin{equation} \label{eq::Gc-size}
    |\G_c|\ge \frac{1}{q}\binom{q}{r-1}.
\end{equation}

\begin{lemma} \label{lem::core-local}
The family \(\G_c\) has the following two properties.
\begin{itemize}
\item If \(S\in\binom{\mathbb F_q}{r+1}\), then the members of \(\G_c\) contained in \(S\) are in bijection with the pairs \(P\in\binom S2\) satisfying
\[
    \sum_{u\in P}u=\sum_{s\in S}s-c,
\]
via \(A=S\setminus P\). In particular, the corresponding pairs are pairwise disjoint.
\item Every \(r\)-set contains at most one member of \(\G_c\).
\end{itemize}
\end{lemma}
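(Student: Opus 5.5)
Both properties follow by rewriting the defining equation $\sum_{a\in A}a=c$ as an equation about the complement of $A$ inside a fixed small set. This is the same complement trick used in the second proof of \cref{goodr-1}, but now with $\lambda(t)=t$ in place of the Sidon map.

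For the first property, fix $S\in\binom{\mathbb F_q}{r+1}$ and write $\sigma=\sum_{s\in S}s$. The map $A\mapsto S\setminus A$ is a bijection between $(r-1)$-subsets of $S$ and pairs $P\in\binom S2$, with inverse $P\mapsto S\setminus P$. For $A=S\setminus P$ we have
\[
\sum_{a\in A}a=\sigma-\sum_{u\in P}u .
\]
Hence $A\in\G_c$ if and only if $\sum_{u\in P}u=\sigma-c$, which gives the claimed bijection.

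For disjointness, suppose two distinct corresponding pairs $P\neq P'$ share an element. Write $P=\{x,y\}$ and $P'=\{x,y'\}$ with $y\neq y'$. Then $x+y=x+y'=\sigma-c$, so $y=y'$, a contradiction. So distinct pairs with the prescribed sum are pairwise disjoint.

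For the second property, suppose an $r$-set $T$ contains two distinct members $A,B\in\G_c$. Both are $(r-1)$-subsets of $T$, so $A=T\setminus\{x\}$ and $B=T\setminus\{y\}$ with $x\neq y$. Subtracting the two sums gives
\[
0=\sum_{a\in A}a-\sum_{b\in B}b=y-x,
\]
so $x=y$, a contradiction. Alternatively, extend $T$ by a vertex $w\notin T$, which exists since $q\ge r^2>r$. The pairs $\{x,w\}$ and $\{y,w\}$ would then be two distinct pairs sharing $w$, contradicting the disjointness from the first property.

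There is no real obstacle here. The only care needed is that the argument happens in $\mathbb F_q$, where the cancellations used are valid field identities, and that sets of elements are distinct, so $y\neq y'$ really does produce a contradiction.
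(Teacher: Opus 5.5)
Your proof is correct and follows essentially the same route as the paper: the complement bijection $A=S\setminus P$, the observation that pairs with a fixed sum form a matching (your direct check $x+y=x+y'\Rightarrow y=y'$ is exactly this, and needs no parity assumption on $q$), and the cancellation $\sum_{z\in T}z-x=\sum_{z\in T}z-y$ for the second property. Your alternative argument for the second property, via an extra vertex $w\notin T$ and the disjointness from the first property, is also valid but not needed.
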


\begin{proof}
The first assertion follows directly from
\[
    \sum_{a\in S\setminus P}a=c
    \qquad\Longleftrightarrow\qquad
    \sum_{u\in P}u=\sum_{s\in S}s-c.
\]
Since \(q\) is odd, the two-element subsets of \(\mathbb F_q\) with a fixed sum form a matching; hence the corresponding pairs in \(S\) are pairwise disjoint.

For the second assertion, suppose that \(X\in\binom{\mathbb F_q}{r}\) and
\(X\setminus\{x\},X\setminus\{y\}\in\G_c\). Then
\[
    \sum_{z\in X}z-x=c=\sum_{z\in X}z-y,
\]
so \(x=y\).
\end{proof}

For \(s\ge 1\), call a collection of \(s\) distinct members of \(\G_c\) \emph{bad} if all its members are contained in a common \((r+1)\)-set.
Let \(B_s\) denote the number of bad \(s\)-collections.

\begin{lemma} \label{cl::bad-s}
For each \(s\in\{k+1,k+2\}\),
\[
    B_s\le C_k r^4|\G_c|.
\]
\end{lemma}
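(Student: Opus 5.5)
We bound $B_s$ by charging each bad $s$-collection to one of its members together with a small amount of extra data. Fix a bad collection $\{A_1,\dots,A_s\}$ contained in an $(r+1)$-set $S$. The structure comes from \cref{lem::core-local}: writing $P_i=S\setminus A_i$, the pairs $P_1,\dots,P_s$ are pairwise disjoint and all have the same sum $\sigma=\sum_{s'\in S}s'-c$. Since $s\ge k+1\ge 3$, and in particular $s\ge 2$, the pairs $P_1,P_2$ are disjoint. Hence $S=A_1\cup P_1$ and $P_2\subseteq A_1$. So the collection is determined by $A_1$ together with the choices below.

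\textbf{Step 1 (choose $S$ from $A_1$).} The set $S=A_1\cup P_1$, where $P_1$ is a pair outside $A_1$. The constraint is that $\sum_{u\in P_1}u=\sigma$ and $P_2\subseteq A_1$ has the same sum. A naive count gives $\binom{q}{2}\le 2r^4$ choices of $P_1$, which already yields the factor $r^4$. Once $S$ is fixed, the sum $\sigma$ is determined, and the pairs in $S$ with sum $\sigma$ form a matching. The number of such pairs inside the $(r+1)$-set $S$ is at most $(r+1)/2$.

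\textbf{Step 2 (choose the rest of the collection).} The remaining members $A_2,\dots,A_s$ correspond to distinct pairs in this matching. So there are at most $\binom{(r+1)/2}{s-1}$ choices, which is polynomial in $r$ with degree $s-1\ge k$. This is too much, so the naive bound fails. The loss has to be recovered by not choosing $P_1$ freely, and I expect this to be the main obstacle.

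\textbf{Step 3 (recount by fixing the sum first).} I will instead count as follows. Fix $A_1$ ($|\G_c|$ ways) and fix the common sum $\sigma$ ($q\le 2r^2$ ways). Inside $A_1$, the pairs with sum $\sigma$ form a matching; choose $s-1$ of them as $P_2,\dots,P_s$. Then $P_1$ is any pair outside $A_1$ with sum $\sigma$, giving at most $q/2$ choices. This is again about $r^4\cdot r^{s-1}$.

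\textbf{Step 4 (first moment over $\G_c$).} Fixing one $A_1$ cannot beat the bound; the gain must come from averaging over $\G_c$. Summed over all of $\G_c$, the number of pairs $(A,\sigma)$ for which $A$ contains at least $s-1$ disjoint pairs of sum $\sigma$ should be small. A uniform $(r-1)$-subset of a field of size $q\approx r^2$ contains each fixed pair with probability about $r^2/q^2\approx r^{-2}$. Among the $q/2$ pairs with sum $\sigma$, the expected number of $(s-1)$-tuples contained in $A$ is therefore about
\[
    \binom{q/2}{s-1}(r/q)^{2(s-1)}\approx r^{2(s-1)}r^{-4(s-1)}\cdot\text{const},
\]
which is $O(r^{-2(s-1)})$ per choice of $\sigma$. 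Summing over $\sigma$ and $P_1$ then gives a count of order $r^4\cdot r^{-2(s-1)}\,|\G_c|$ or better, which is at most $C_kr^4|\G_c|$.

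The technical point in Step 4 is that $\G_c$ is not uniform; it is a sum-constrained slice. I will handle this by a character-sum or direct-counting argument. Fixing the $2(s-1)$ elements of $P_2,\dots,P_s$ inside $A$ changes the sum condition only by a shift. The number of $(r-1)$-sets with a prescribed sum containing a given $2(s-1)$-set is then roughly $\frac1q\binom{q-2(s-1)}{r-1-2(s-1)}$, up to a factor $1+o(1)$. This holds because the sum of a random $j$-subset of $\mathbb F_q$ is nearly equidistributed when $1\le j\le q-1$.

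Combining with \eqref{eq::Gc-size}, this gives $B_s\le C_k r^4 q\cdot r^{-2(s-1)}\cdot|\G_c|\cdot O(1)$, which is comfortably at most $C_k r^4|\G_c|$. The equidistribution estimate in Step 4 is the step I expect to require the most care.
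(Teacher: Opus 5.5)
Your Steps 3--4 give a valid proof by a different route from the paper's. However, the exponent count in Step 4 is wrong, and the bound you actually obtain has no slack. For fixed $\sigma$, the expected number of $(s-1)$-subsets of the $\sigma$-matching lying inside $A$ is about $\binom{(q-1)/2}{s-1}(r/q)^{2(s-1)}\asymp r^{2s-2}q^{1-s}=\Theta_s(1)$, not $O(r^{-2(s-1)})$. Since $r/q\approx r^{-1}$, the factor $(r/q)^{2(s-1)}$ is $r^{-2(s-1)}$, not $r^{-4(s-1)}$. Carried out correctly, your charging scheme gives
\[
    B_s\le\sum_{\sigma\in\mathbb F_q}\ \sum_{\mathcal P}\ \#\Bigl\{A\in\G_c:\bigcup\mathcal P\subseteq A\Bigr\}\cdot\frac q2
    \le q\cdot q^{s-1}\cdot\frac1q\binom{q}{r+1-2s}\cdot\frac q2
    \le q^{s}\binom{q}{r+1-2s},
\]
where $\mathcal P$ ranges over $(s-1)$-subsets of the $\sigma$-matching. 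This is exactly the paper's intermediate bound. Dividing by $|\G_c|\ge\frac1q\binom q{r-1}$ yields $C_kr^{2s-2}q^{3-s}\le C_kr^4$, which is precisely the target. Your stated bound $C_kr^4q\,r^{-2(s-1)}|\G_c|$ is actually false for $s\ge3$. The paper's parametrization by $R$ is a bijection onto bad collections, and each $R$ admits $\Theta(q^s)$ choices of the pairs, so $B_s\asymp r^4|\G_c|$. Your argument proves the lemma only because the correct count lands exactly on $r^4$, so drop the claim of a comfortable margin.

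The equidistribution step needs no character sums, because only an upper bound is required. Drop the condition that the remaining $j=r+1-2s$ elements of $A$ avoid $\bigcup\mathcal P$. The number of $j$-subsets of $\mathbb F_q$ with a prescribed sum is then exactly $\frac1q\binom qj$ for $1\le j\le q-1$. Indeed, translating by $x$ shifts the sum by $jx$, and $j$ is invertible modulo the prime $q$; for large $r$ we have $j\ge1$.

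Compared with the paper: the paper parametrizes a bad collection directly by $R=S\setminus(P_1\cup\cdots\cup P_s)$ and the $s$ pairs. It uses $(s-1)L=c-\sum_{x\in R}x$ to read off the common sum $L$ from $R$, which gives $B_s\le\binom q{r+1-2s}q^s$ in one line. You use the same single linear constraint the other way round. You sum freely over $\sigma$, losing a factor $q$, and recover that factor from the condition $\sum_{a\in A}a=c$. The resulting counts are identical. The paper's version avoids any distributional statement and any charging to a distinguished member; yours is equally rigorous once the arithmetic is repaired.
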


\begin{proof}
Fix \(s\in\{k+1,k+2\}\), and suppose that \(A_1,\ldots,A_s\in\G_c\) are distinct and contained in some \(S\in\binom{\mathbb F_q}{r+1}\).
Write \(P_i=S\setminus A_i\).
By \cref{lem::core-local}, the pairs \(P_i\) are pairwise disjoint and have a common sum, say
\[
    \sum_{x\in P_i}x=L
    \qquad\text{for all }i\in[s].
\]
Let \(R=S\setminus(P_1\cup\cdots\cup P_s)\). Then \(|R|=r+1-2s\). Since
\[
    \sum_{x\in S}x=\sum_{x\in R}x+sL
    \qquad\text{and}\qquad
    c=\sum_{x\in A_i}x=\sum_{x\in S}x-L,
\]
we have
\[
    (s-1)L=c-\sum_{x\in R}x.
\]
For sufficiently large \(r\), \(s-1\ne 0\) in \(\mathbb F_q\), so \(R\) determines \(L\).
Once \(R\) and \(L\) are fixed, the pairs \(P_1,\ldots,P_s\) form an
\(s\)-element submatching of the matching of pairs with sum \(L\). This
matching has at most \(q/2\) edges, and hence the number of possible
unordered collections \(\{P_1,\ldots,P_s\}\) is at most
\[
    \binom{\lfloor q/2\rfloor}{s}\le q^s.
\]
Therefore
\[
    B_s\le \binom{q}{r+1-2s}q^s.
\]
Using \eqref{eq::Gc-size} and \(q-r\ge q/2\), we get
\[
    \frac{B_s}{|\G_c|}
    \le
    q\frac{\binom{q}{r+1-2s}}{\binom{q}{r-1}}q^s
    \le
    C_k q^{s+1}\left(\frac rq\right)^{2s-2}
    =
    C_k r^{2s-2}q^{3-s}
    \le C_k r^4,
\]
where the last inequality uses \(s\ge k+1\ge 3\) and \(q\ge r^2\).
\end{proof}

\begin{lemma} \label{lem::sparse-cores}
There is a subfamily \(\G_1\subseteq\G_c\) such that
\begin{equation} \label{eq::G1-general-size}
    |\G_1|\ge c_k r^{-4/(k+1)}|\G_c|,
\end{equation}
no \((r+1)\)-set contains more than \(k+1\) members of \(\G_1\), and the number of bad \((k+1)\)-collections in \(\G_1\) is at most \(C_k|\G_c|\).
\end{lemma}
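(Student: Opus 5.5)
We prove this by random sampling followed by alteration, using \cref{cl::bad-s} to control the expected number of bad configurations. Fix a probability \(p\in(0,1]\), to be chosen as \(p=\varepsilon_k r^{-4/(k+1)}\) for a small constant \(\varepsilon_k>0\). Let \(\G'\subseteq\G_c\) be obtained by keeping each member of \(\G_c\) independently with probability \(p\). Then
\[
  \E|\G'|=p|\G_c|,\qquad
  \E[\#\text{bad }(k+2)\text{-collections in }\G']=p^{k+2}B_{k+2},\qquad
  \E[\#\text{bad }(k+1)\text{-collections in }\G']=p^{k+1}B_{k+1}.
\]
Since bad collections are defined by containment in a common \((r+1)\)-set, any sub-collection of a bad collection is bad. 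Hence an \((r+1)\)-set containing at least \(k+2\) members of \(\G'\) yields a bad \((k+2)\)-collection inside \(\G'\).

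For the alteration, delete one member from each bad \((k+2)\)-collection of \(\G'\), and call the result \(\G_1\). Then no \((r+1)\)-set contains more than \(k+1\) members of \(\G_1\). Bad \((k+1)\)-collections in \(\G_1\) are also bad in \(\G'\), because deletion only removes collections. By \cref{cl::bad-s},
\[
  \E\bigl[|\G_1|\bigr]\ge p|\G_c|-p^{k+2}C_kr^4|\G_c|,
  \qquad
  \E[\#\text{bad }(k+1)\text{ in }\G_1]\le p^{k+1}C_kr^4|\G_c|.
\]
With \(p=\varepsilon_k r^{-4/(k+1)}\) we get \(p^{k+1}r^4=\varepsilon_k^{k+1}\). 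The second expectation is therefore at most \(C_k\varepsilon_k^{k+1}|\G_c|\), which is \(O_k(|\G_c|)\). For the first, \(p^{k+2}C_kr^4=p\cdot C_k\varepsilon_k^{k+1}\le p/2\) once \(\varepsilon_k\) is small, so \(\E|\G_1|\ge \tfrac p2|\G_c|\). We need \(p\le 1\), which holds for large \(r\).

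It remains to get a single outcome satisfying both bounds at once. Consider the random variable
\[
  Z=|\G_1|-\lambda\cdot\#\{\text{bad }(k+1)\text{-collections in }\G_1\}
\]
with a suitable constant \(\lambda\), say \(\lambda=p/(4C_k\varepsilon_k^{k+1})\), which is of order \(p\). Then \(\E Z\ge p|\G_c|/4\). Fix an outcome with \(Z\ge \E Z\). In that outcome \(|\G_1|\ge p|\G_c|/4=c_kr^{-4/(k+1)}|\G_c|\). Also \(\lambda\cdot\#\text{bad}\le|\G_1|\le|\G_c|\), so \(\#\text{bad}\le |\G_c|/\lambda\), which is of order \(|\G_c|/p\). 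That is too weak.

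The simpler fix is Markov's inequality plus a concentration argument. By Markov, \(\#\text{bad}_{k+1}\le 4C_k\varepsilon_k^{k+1}|\G_c|\) with probability at least \(3/4\). For \(|\G_1|\), we have \(|\G_1|\ge|\G'|-X\), where \(X\) is the number of bad \((k+2)\)-collections in \(\G'\). Markov gives \(X\le 4\E X\le p|\G_c|/8\) with probability at least \(3/4\) after shrinking \(\varepsilon_k\). The size \(|\G'|\) is binomial with mean \(p|\G_c|\), and this mean tends to infinity because \(|\G_c|\) is huge. Chebyshev therefore gives \(|\G'|\ge p|\G_c|/2\) with probability at least \(3/4\). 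A union bound leaves positive probability that all three events hold, which yields the required \(\G_1\).

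The main obstacle is this final step of controlling two quantities simultaneously, and the concentration of \(|\G'|\) together with Markov on the two bad counts handles it cleanly.
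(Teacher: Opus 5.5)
Your argument is correct and is essentially the paper's proof. You sample each member of $\G_c$ with probability $\Theta_k(r^{-4/(k+1)})$, apply \cref{cl::bad-s} with Markov to the bad $(k+1)$- and $(k+2)$-collection counts, use concentration for $|\G'|$ (Chebyshev where the paper cites Chernoff), and delete one member from each bad $(k+2)$-collection; the weighted-variable $Z$ detour is unnecessary and can simply be dropped, as you noticed.
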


\begin{proof}
Choose a sufficiently small constant \(\alpha=\alpha(k)>0\), and set
\[
    \rho=\alpha r^{-4/(k+1)}.
\]
Let \(\G'\subseteq\G_c\) be obtained by retaining each member independently with probability \(\rho\).
Let \(Y_s\) be the number of bad \(s\)-collections contained in \(\G'\).
By \cref{cl::bad-s},
\[
    \E |\G'|=\rho|\G_c|,\qquad
    \E Y_{k+1}\le C_k\alpha^{k+1}|\G_c|,
    \qquad
    \E Y_{k+2}\le C_k\alpha^{k+1}\rho|\G_c|.
\]
Since \(\rho|\G_c|\to\infty\) as \(r\to\infty\), the Chernoff bound for \(|\G'|\), Markov's inequality for \(Y_{k+1},Y_{k+2}\), and the choice of \(\alpha\) imply that, with positive probability,
\[
    |\G'|\ge \frac12\rho|\G_c|,
    \qquad
    Y_{k+2}\le \frac14\rho|\G_c|,
    \qquad
    Y_{k+1}\le C_k|\G_c|.
\]
Fix such a realization.

For each bad \((k+2)\)-collection in \(\G'\), choose one of its members and delete all members chosen in this way.
Let the resulting family be \(\G_1\).
Then
\[
    |\G_1|\ge |\G'|-Y_{k+2}\ge \frac14\rho|\G_c|,
\]
which gives \eqref{eq::G1-general-size}.
No bad \((k+2)\)-collection remains, so no \((r+1)\)-set contains more than \(k+1\) members of \(\G_1\).
Finally, deleting members cannot create bad \((k+1)\)-collections, so the number of such collections in \(\G_1\) is at most \(Y_{k+1}\le C_k|\G_c|\).
\end{proof}

\begin{lemma} \label{lem::independent-cores}
There is a subfamily \(\G_2\subseteq\G_1\) such that every \((r+1)\)-set contains at most \(k\) members of \(\G_2\), and
\begin{equation} \label{eq::G2-general-size}
    |\G_2|\ge c_k|\G_c|r^{-4/k}(\log r)^{1/k}.
\end{equation}
\end{lemma}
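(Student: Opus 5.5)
We form an auxiliary $(k+1)$-uniform hypergraph $\mathcal{A}$ whose vertex set is $\G_1$ and whose edges are the bad $(k+1)$-collections in $\G_1$. An independent set $\G_2$ in $\mathcal{A}$ contains no bad $(k+1)$-collection. Any $(r+1)$-set $S$ containing $k+1$ members of $\G_2$ would produce such a collection, so every $(r+1)$-set contains at most $k$ members of $\G_2$. It therefore suffices to find a large independent set in $\mathcal{A}$. We will do this with the theorem of Duke, Lefmann, and R\"odl, which applies because $\mathcal{A}$ turns out to be linear.

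\emph{Linearity.} Suppose two distinct bad $(k+1)$-collections $\mathcal{C}_1,\mathcal{C}_2$, lying in $(r+1)$-sets $S_1,S_2$, share two members $A\ne B$. Since every $r$-set contains at most one member of $\G_c$ (\cref{lem::core-local}), we have $|A\cup B|\ge r+1$. Hence $A\cup B=S_1=S_2$, so $S_1$ contains at least $k+2$ members of $\G_1$. This contradicts \cref{lem::sparse-cores}. Thus any two edges of $\mathcal{A}$ meet in at most one vertex.

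\emph{Average degree.} Set $N=|\G_1|$. By \cref{lem::sparse-cores}, $|\mathcal{A}|\le C_k|\G_c|$ and $N\ge c_k r^{-4/(k+1)}|\G_c|$. The average degree therefore satisfies
\[
  t^{k}:=\frac{(k+1)|\mathcal{A}|}{N}\le C_k r^{4/(k+1)}.
\]
For $(k+1)$-uniform linear hypergraphs with average degree at most $t^k$, the Duke--Lefmann--R\"odl theorem gives an independent set of size at least $c_k\frac{N}{t}(\log t)^{1/k}$. If the average degree is $O(1)$, we instead use the trivial bound $\Omega(N)$, which is even better.

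\emph{Computation.} Take $t=\max\{t_0,(C_kr^{4/(k+1)})^{1/k}\}$. Then $\log t=\Theta_k(\log r)$ and $t\asymp r^{4/(k(k+1))}$. This yields
\[
  |\G_2|\ge c_k |\G_c|\, r^{-4/(k+1)}\, r^{-4/(k(k+1))}(\log r)^{1/k}=c_k|\G_c|r^{-4/k}(\log r)^{1/k},
\]
using $\frac{4}{k+1}+\frac{4}{k(k+1)}=\frac{4}{k}$. This is exactly \eqref{eq::G2-general-size}.

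\emph{Main obstacle.} The main difficulty is verifying that $\mathcal{A}$ is linear, which depends on the $(k+2)$-free property established in \cref{lem::sparse-cores}. A secondary point is stating the Duke--Lefmann--R\"odl theorem in its average-degree form. If only the maximum-degree form is available, we will first delete the vertices of degree more than twice the average, which keeps at least $N/2$ vertices.
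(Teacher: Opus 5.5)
Your proposal is correct and follows the paper's proof essentially step for step. You use the same auxiliary $(k+1)$-uniform hypergraph of bad collections on $\G_1$, and the same linearity argument: two shared members force $A\cup B$ to be the unique $(r+1)$-set containing both, which holds at most $k+1$ members of $\G_1$ by \cref{lem::sparse-cores}. You then apply the same average-degree form of Duke--Lefmann--R\"odl, with the exponent identity $\frac{4}{k+1}+\frac{4}{k(k+1)}=\frac{4}{k}$.
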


\begin{proof}
Define a \((k+1)\)-uniform hypergraph \(\mathcal T\) on vertex set \(\G_1\) by declaring a \((k+1)\)-set to be an edge if its members are contained in a common \((r+1)\)-set.

We first show that \(\mathcal T\) is linear.
Suppose that two edges share two vertices \(A,B\in\G_1\).
Then \(A\) and \(B\) are contained in a common \((r+1)\)-set, so \(|A\cap B|\ge r-3\).
The case \(|A\cap B|=r-2\) is impossible, since then the \(r\)-set \(A\cup B\) would contain two distinct members of \(\G_c\), contradicting \cref{lem::core-local}.
Thus \(|A\cap B|=r-3\), and \(A\cup B\) is the unique \((r+1)\)-set containing both \(A\) and \(B\).
By \cref{lem::sparse-cores}, this \((r+1)\)-set contains at most \(k+1\) members of \(\G_1\).
Hence there is at most one edge of \(\mathcal T\) containing \(A\) and \(B\), proving linearity.

The number of edges of \(\mathcal T\) is at most the number of bad \((k+1)\)-collections in \(\G_1\), hence at most \(C_k|\G_c|\).
Together with \eqref{eq::G1-general-size}, this shows that the average degree of \(\mathcal T\) is at most
\[
    D=C_k r^{4/(k+1)}.
\]
By the linear-hypergraph form of the theorem of Duke, Lefmann, and R\"odl~\cite{duke1995uncrowded}, every linear \((k+1)\)-uniform hypergraph on \(N\) vertices with average degree at most \(D\) has an independent set of size at least
\[
    c_k N\left(\frac{\log D}{D}\right)^{1/k}.
\]
Applying this to \(\mathcal T\), and using \(D=C_k r^{4/(k+1)}\), gives an independent set \(\G_2\subseteq\G_1\) satisfying
\[
    |\G_2|
    \ge
    c_k|\G_1|\left(r^{-4/(k+1)}\log r\right)^{1/k}
    \ge
    c_k|\G_c|r^{-4/k}(\log r)^{1/k}.
\]
Since \(\G_2\) is independent in \(\mathcal T\), no \((r+1)\)-set contains \(k+1\) members of \(\G_2\).
\end{proof}

\begin{lemma}
\label{lem::upper-shadow-even}
Let
\[
    \mH_0=\us\G_2=\left\{X\in\binom{\mathbb F_q}{r}: \exists A\in\G_2 \text{ such that } A\subseteq X\right\}.
\]
Then every \((r+1)\)-set spans one of \(0,2,\ldots,2k\) edges in \(\mH_0\), and
\begin{equation} \label{eq::shadow-size-even-log}
    |\mH_0|\ge c_k r^{-4/k}(\log r)^{1/k}\frac rq\binom qr.
\end{equation}
\end{lemma}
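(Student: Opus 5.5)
The plan has two parts: the local parity property and the edge count.

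\textbf{Local property.} Fix an $(r+1)$-set $S\subseteq\mathbb F_q$. By the second assertion of \cref{lem::core-local}, every $r$-set contains at most one member of $\G_c$, and hence at most one member of $\G_2\subseteq\G_c$. So every edge $X$ of $\mH_0$ with $X\subseteq S$ contains exactly one member $A\in\G_2$, and $A\subseteq S$. Conversely, each $A\in\G_2$ with $A\subseteq S$ lies in exactly two $r$-subsets of $S$, namely $A\cup\{u\}$ and $A\cup\{v\}$ where $S\setminus A=\{u,v\}$. Both of these are edges of $\mH_0$. This gives
\[
|\mH_0[S]|=2\,\bigl|\{A\in\G_2: A\subseteq S\}\bigr|,
\]
the same double counting as in \cref{goodUpperShadow}. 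By \cref{lem::independent-cores}, $S$ contains at most $k$ members of $\G_2$, so $|\mH_0[S]|\in\{0,2,\ldots,2k\}$.

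\textbf{Size.} Since each edge of $\mH_0$ contains a unique member of $\G_2$, the map sending an edge to that member is well defined. Each $A\in\G_2$ has exactly $q-(r-1)$ preimages. Therefore
\[
|\mH_0|=(q-r+1)|\G_2|.
\]
Now combine \eqref{eq::G2-general-size} and \eqref{eq::Gc-size}:
\[
|\mH_0|\ge c_k r^{-4/k}(\log r)^{1/k}\,\frac{q-r+1}{q}\binom{q}{r-1}.
\]
The identity $(q-r+1)\binom{q}{r-1}=r\binom qr$ turns the right-hand side into exactly $c_k r^{-4/k}(\log r)^{1/k}\frac rq\binom qr$, which is \eqref{eq::shadow-size-even-log}.

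There is no real obstacle. The only point needing care is the uniqueness of the contained core, which uses the fact that cores live in $\G_c$ (not merely in $\G_2$) so that \cref{lem::core-local} applies. That uniqueness is what makes both the exact count $2\cdot\#\{A\subseteq S\}$ and the exact edge count valid.
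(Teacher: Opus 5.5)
Your proposal is correct and follows the paper's proof essentially step for step. You use uniqueness of the contained member of $\G_c$ for both the exact count $|\mH_0|=(q-r+1)|\G_2|$ and the local count $2m$ with $m\le k$, then combine \eqref{eq::Gc-size} and \eqref{eq::G2-general-size} via $(q-r+1)\binom{q}{r-1}=r\binom qr$.
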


\begin{proof}
By \cref{lem::core-local}, every \(r\)-set contains at most one member of \(\G_c\).
Hence every edge of \(\mH_0\) contains a unique member of \(\G_2\), and therefore
\[
    |\mH_0|=(q-r+1)|\G_2|.
\]
Combining this with \eqref{eq::Gc-size} and \eqref{eq::G2-general-size} gives
\[
    |\mH_0|
    \ge
    (q-r+1)c_k r^{-4/k}(\log r)^{1/k}\frac1q\binom{q}{r-1}
    =
    c_k r^{-4/k}(\log r)^{1/k}\frac rq\binom qr,
\]
which proves \eqref{eq::shadow-size-even-log}.

Now fix \(S\in\binom{\mathbb F_q}{r+1}\), and suppose that \(S\) contains exactly \(m\) members of \(\G_2\).
By \cref{lem::independent-cores}, \(m\le k\).
Writing these members as \(A_i=S\setminus P_i\), the pairs \(P_i\) are pairwise disjoint by \cref{lem::core-local}.
Each \(A_i\) contributes exactly the two \(r\)-subsets of \(S\) obtained by adding one element of \(P_i\), and these contributions are disjoint because every \(r\)-set contains at most one member of \(\G_c\).
Thus
\[
    |\mH_0\cap\binom Sr|=2m\in\{0,2,\ldots,2k\}. \qedhere
\]
\end{proof}

\begin{proof}[Proof of \cref{thm::even-counts}]
Let \(\mH_0\) be the \(q\)-vertex \(r\)-graph from \cref{lem::upper-shadow-even}.
For each \(N\), take a balanced blow-up of the \(q\) vertices into \(N\) vertices, and include all transversal \(r\)-sets whose labels form an edge of \(\mH_0\).
Call the resulting \(r\)-graph \(\mH_N\).

Let \(\pi\) be the projection to the \(q\) labels, and let \(S\) be an \((r+1)\)-set in the blow-up.
If \(|\pi(S)|\le r-1\), then \(S\) spans no edge.
If \(|\pi(S)|=r\), then exactly one label is repeated, so \(S\) spans either no edge or exactly two edges.
If \(|\pi(S)|=r+1\), then \(S\) is transversal and the number of edges spanned by \(S\) equals
\[
    |\mH_0\cap\binom{\pi(S)}r|\in\{0,2,\ldots,2k\}.
\]
Thus \(\mH_N\) satisfies the required local condition. Hence
\[
    M_{\le 2k}(N,r)\ge |\mH_N|.
\]

It remains to estimate the limiting density as \(N\to\infty\).
For the balanced blow-up,
\[
    \lim_{N\to\infty}\frac{|\mH_N|}{\binom Nr}
    =
    |\mH_0|\frac{r!}{q^r}.
\]
By \eqref{eq::shadow-size-even-log},
\[
    |\mH_0|\frac{r!}{q^r}
    \ge
    c_k r^{-4/k}(\log r)^{1/k}\frac rq
    \prod_{i=0}^{r-1}\left(1-\frac iq\right).
\]
Since \(q\ge r^2\),
\[
    \prod_{i=0}^{r-1}\left(1-\frac iq\right)
    \ge
    1-\sum_{i=0}^{r-1}\frac iq
    \ge \frac12,
\]
and since \(q\le 2r^2\), we also have \(r/q\ge 1/(2r)\).
Therefore
\[
    \lim_{N\to\infty}\frac{M_{\le 2k}(N,r)}{\binom Nr}
    \ge
    \lim_{N\to\infty}\frac{|\mH_N|}{\binom Nr}
    \ge
    c_k r^{-1-4/k}(\log r)^{1/k}. \qedhere
\]
\end{proof}

\section{Concluding remarks} \label{sec::Con}
It is natural to consider the following more general local problem. For a set $\Lambda\subseteq \{0,1,\ldots,r+1\}$, let $M_\Lambda(n,r)$ be the maximum size of an $r$-graph $\mH$ on $n$ vertices such that
$
    |\mH\cap \binom{S}{r}|\in \Lambda
$
for every $(r+1)$-set $S$. For every fixed $r$ and $\Lambda$, the limit
\[
    m_\Lambda(r)\ce \lim_{n\to\infty}\frac{M_\Lambda(n,r)}{\binom nr}
\]
exists by the same averaging argument as before.
In this notation, \cref{pro::0or2} asks for lower and upper bounds on $m_{\{0,2\}}(r)$.

The results in \cref{thm::even-counts,cor::024-log} suggest that the structure of the allowed set $\Lambda$ is important. A basic open direction is to determine which choices of $\Lambda$ allow density of order $r^{-2}$, or even larger, and which choices are constrained to the $r^{-3}$ scale by upper-shadow-type barriers.

\section*{Acknowledgments and declaration on the use of generative AI}

A slightly weaker lower bound for Theorem~\ref{1overr3} of the form $\Omega(r^{-5})$ was obtained by the authors without any use of AI tools, where instead of~\cref{goodr-1}, a simple probabilistic deletion method was used. Subsequently, ChatGPT 5.4 Pro was used to find the better constructions in the proof of \cref{goodr-1} to obtain the $\Omega(r^{-3})$ bound; additionally, upon being given a sketch of the argument, it produced a proof of the bound in \cref{thm::even-counts}. All proofs and calculations were checked by the authors.

The authors are grateful to Huy Tuan Pham for helpful discussions.

V.J.~is partially supported by NSF grant DMS-2237646. D.M.~is partially supported by NSF grant DMS-2153576.

\end{document}